\documentclass[11pt]{article}

\usepackage[T1]{fontenc}
\usepackage[utf8]{inputenc}
\usepackage{lmodern}

\usepackage[letterpaper, margin=1.15in]{geometry}
\usepackage{microtype}
\usepackage{setspace}

\usepackage{amsmath, amsthm, amssymb, amsfonts}
\usepackage{mathrsfs}

\usepackage{enumitem}

\setlist[enumerate,1]{
    label=(\arabic*),
    leftmargin=2.2em,
    itemsep=0.25em,
    topsep=0.4em
}

\usepackage[colorlinks=true,
            linkcolor=blue,
            citecolor=blue,
            urlcolor=blue]{hyperref}

\title{Anosov Diffeomorphisms of Finite-Type Surfaces}

\author{
Ra\'ul Ures\\
Department of Mathematics, Southern University of Science and Technology\\
SUSTech International Center for Mathematics\\
Shenzhen, Guangdong, China
\and
Tongyao Yu\\
School of Mathematical Sciences, Hebei Normal University\\
Shijiazhuang, Hebei, China
}

\date{\today}

\theoremstyle{plain}
\newtheorem{theorem}{Theorem}[section]
\newtheorem{lemma}[theorem]{Lemma}
\newtheorem{corollary}[theorem]{Corollary}
\newtheorem{proposition}[theorem]{Proposition}

\theoremstyle{definition}

\newtheorem{problem}[theorem]{Problem}

\theoremstyle{remark}
\newtheorem{remark}[theorem]{Remark}

\numberwithin{equation}{section}

\begin{document}

\maketitle

\begin{abstract}
We prove that a complete surface of finite topological type satisfying
Condition A is homeomorphic to the two-torus. More generally, we show that
an isolated planar end of a complete open surface satisfying Condition A
cannot be periodic under the induced action on the space of ends.
Condition A includes dense periodic points, a uniformly hyperbolic splitting into one-dimensional subbundles for a complete metric, and invariant line fields that
uniquely integrate to transverse stable and unstable foliations. These results give a partial answer to the question
of whether a complete surface satisfying Condition A must be compact.
\end{abstract}

\section{Introduction}

Throughout, by a surface we mean a second-countable surface without boundary.
The classification of surfaces supporting uniformly hyperbolic dynamics is a classical theme in differentiable dynamics.  On compact surfaces, the situation is rigid.  Let $M$ be a closed surface and let $f:M\to M$ be a $C^1$ Anosov diffeomorphism, that is, the tangent bundle admits a continuous $Df$-invariant splitting
$$
TM=E^s\oplus E^u
$$
into one-dimensional subbundles, where $E^s$ is uniformly contracted by $Df$ and $E^u$ is uniformly contracted by $Df^{-1}$.  In particular, $E^s$ and $E^u$ integrate to nonsingular stable and unstable foliations.  This already imposes a strong topological obstruction.  After passing, if necessary, to a two-fold cover on which one of the invariant line fields is orientable, one obtains a nowhere-vanishing vector field.  By the Poincaré--Hopf theorem, the Euler characteristic of this cover is zero.
Euler characteristic is multiplicative under finite covers, so $\chi(M)=0$
\cite{Milnor1965}. Hence $M$ is either the two-torus or the Klein bottle.
The latter is excluded by the classical fact that every nonsingular
one-dimensional foliation of the Klein bottle has a compact leaf, which is
incompatible with the uniform contraction or expansion of the invariant
foliations of an Anosov diffeomorphism. Thus, among closed surfaces, the only
possible surface is the two-torus.

This elementary obstruction is part of a broader rigidity picture. The classical results of Franks and Manning show that every Anosov diffeomorphism of $\mathbb T^n$ is topologically conjugate to a hyperbolic toral automorphism \cite{Franks1969,Manning1974}. In a complementary direction, Franks and Newhouse proved that every codimension-one Anosov diffeomorphism of a closed manifold is topologically conjugate to a hyperbolic toral automorphism \cite{Franks1970,Newhouse1970}.
Since on a surface both invariant bundles are one-dimensional, the compact surface case is completely rigid: a closed surface carrying an Anosov diffeomorphism must be $\mathbb T^2$, and the dynamics is modeled by linear hyperbolicity. 

The purpose of this paper is to examine to what extent this compact rigidity persists for complete noncompact surfaces.  The noncompact setting is substantially more flexible.  Hyperbolicity on an open manifold depends sensitively on the choice of metric, and complete open surfaces may support Anosov-type dynamics with behavior impossible in the compact case.  For instance, White constructed a complete Riemannian metric on $\mathbb R^2$ for which a translation becomes an Anosov diffeomorphism \cite{White1975}. However, the known examples remain far from the strong
recurrence assumptions considered here.

A natural near-example is obtained by deleting a fixed point from an Anosov
diffeomorphism of $\mathbb T^2$. The resulting punctured surface still has
dense periodic points, and the stable and unstable foliations retain much of
the qualitative structure of the original toral model away from the puncture.
However, the restricted toral metric is incomplete. Thus dense recurrence can
survive the puncturing operation, but completeness is lost exactly at the
deleted orbit. This example suggests that completeness may be the decisive
obstruction to producing complete open examples with dense recurrent
hyperbolic behavior.

A further motivation arises from the study of accessibility in three-dimensional partially hyperbolic dynamics. For a non-accessible partially hyperbolic diffeomorphism, the non-open accessibility classes may contain an invariant $su$-lamination tangent to $E^s\oplus E^u$. In the conservative setting, the boundary leaves of a proper invariant sublamination are periodic. An iterate preserving such a leaf restricts to an Anosov diffeomorphism on the leaf, and the periodic points are dense in its intrinsic topology \cite{RHRHU2008,CRHRHU2018}. Consequently, a noncompact boundary leaf gives rise naturally to an open surface carrying recurrent uniformly hyperbolic dynamics. The question of whether such leaves can occur is therefore closely connected with compactness problems for Anosov dynamics on complete surfaces.

Motivated by this phenomenon, Rodr\'iguez Hertz, Rodr\'iguez Hertz, and Ures
asked whether a complete immersed surface $L$ in a three-manifold, endowed
with Anosov dynamics for which the stable and unstable manifolds are
complete, the angle between them is bounded away from zero, the periodic
points are dense, and the stable and unstable manifolds of every periodic
point are dense, must be the two-torus \cite{CRHRHU2018}. An affirmative
answer would rule out noncompact boundary leaves in the corresponding
$su$-lamination alternative. The present paper considers an intrinsic
surface formulation of this question.

A closely related recent result is due to Ben Ovadia and DeWitt
\cite{OvadiaDeWitt2025}, who study Anosov diffeomorphisms of complete open
surfaces under substantially stronger uniformity assumptions than those used
here. Their notion of a uniform Anosov diffeomorphism imposes uniform control
on the ambient geometry, the invariant splitting, and the derivative, together
with a uniform lower bound on the angle between the invariant bundles and
leafwise completeness. Under density of periodic points they construct
Margulis measures, and show that if these measures assign infinite mass to
every unstable leaf, then the surface is closed. A topological ingredient in
their argument is that an $\mathbb R$-covered foliation without closed leaves
forces the fundamental group to be abelian
\cite[Prop.~4.3]{OvadiaDeWitt2025}. The subsequent classification argument
uses \cite[Prop.~1.2]{Mendes1977} to exclude the plane and
\cite[Lem.~4.3]{HammerlindlHRU2020} to exclude the cylinder.
On the other hand, Haefliger and Reeb showed that the leaf space of a
nonsingular one-dimensional foliation of $\mathbb R^2$ is a second-countable,
simply connected one-dimensional manifold, possibly non-Hausdorff
\cite{HaefligerReeb1957,HaefligerReebTranslation}. Consequently, if the
lifted leaf space is Hausdorff, it is homeomorphic to $\mathbb R$. Combined
with the preceding argument, this shows that in this setting Hausdorffness of
the lifted stable or unstable leaf space already forces the surface to be
$\mathbb T^2$.

We now formulate the standing hypotheses.  Let $S$ be a connected complete surface and let $f:S\to S$ be a diffeomorphism.  We say that $f$ satisfies \emph{Condition A} on $S$ if the following properties hold.

\begin{enumerate}
\item The periodic points of $f$ are dense in $S$.
\item There is a smooth complete Riemannian metric on $S$ and a continuous $Df$-invariant splitting
$$
TS=E^s\oplus E^u
$$
into one-dimensional subbundles.
\item There exist constants $C>0$ and $\lambda>1$ such that, for every $x\in S$,
every $n\ge 1$, every $v\in E^u_x\setminus\{0\}$, and every
$w\in E^s_x\setminus\{0\}$,
$$
|D_xf^n(v)|\ge C\lambda^n|v|,
\qquad
|D_xf^n(w)|\le C\lambda^{-n}|w|.
$$
\item The invariant distributions $E^s$ and $E^u$ are uniquely integrable
to invariant one-dimensional foliations $W^s$ and $W^u$.

\end{enumerate}
In the classical compact setting, the unique integrability of the invariant
distributions into stable and unstable foliations follows from the
Hadamard--Perron theory \cite{Hadamard1901,Perron1929}. Since the surface
considered here need not be compact, we include unique integrability explicitly
among the standing hypotheses.
Since $W^s$ and $W^u$ are transverse one-dimensional foliations,
they admit the usual local product charts, which we use throughout.

 The central question is the following closedness problem.

\begin{problem}
\label{prob:closedness}
Let $f:S\to S$ satisfy Condition A on a complete surface $S$.  Must $S$ be closed?
\end{problem}

If the answer is positive, then the compact classification immediately implies that $S$ is the two-torus.  Thus the essential difficulty is not the classification of compact examples, but rather the exclusion of complete open surfaces satisfying Condition A.

Our main result excludes isolated planar periodic ends. Recall that an end of a surface is planar if it has a neighborhood of genus
zero, and that an end is isolated if it is an isolated point of the end space.
In particular, an isolated planar end admits a closed neighborhood
homeomorphic to
$$
S^1\times[0,\infty).
$$
The dynamics of $f$ induces an action on the space of ends of $S$. We prove
that an isolated planar end cannot be periodic under this action.

\begin{theorem}
\label{thm:no-planar-periodic-end}
Let $f:S\to S$ satisfy Condition A on a complete open surface $S$. Then no
isolated planar end of $S$ is periodic under the action induced by $f$ on the
space of ends.
\end{theorem}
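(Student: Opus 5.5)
Suppose, for contradiction, that $e$ is an isolated planar end with $f^k(e)=e$ for some $k\ge1$. Replacing $f$ by $f^k$ preserves Condition A: periodic points of $f^k$ are exactly those of $f$, the metric and splitting are unchanged, the constants become $C$ and $\lambda^k$, and the foliations are unchanged. So we may assume $f(e)=e$. Choose a closed neighborhood $U\cong S^1\times[0,\infty)$ of $e$ whose boundary $\gamma=S^1\times\{0\}$ is a smooth simple closed curve.

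The plan is to find an essential closed curve near $e$ whose length is forced both to blow up and to stay bounded, using the recurrence and hyperbolicity. Since $f$ fixes $e$, for every $n$ the set $f^n(U)\cap U$ contains a neighborhood of $e$. Hence the loops $f^n(\gamma)$ are homotopic, inside $S$ minus a compact set, to the core of the end. The key point is a length estimate. Write $\gamma$ as a concatenation of finitely many short arcs, each lying in a local product chart and made of one stable segment and one unstable segment. We can do this because the foliations are transverse and the bundles are continuous, so on the compact set $\gamma$ the angle between them is bounded below. Under $f^{-n}$ the stable pieces expand and the unstable pieces contract.

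Next, use density of periodic points to build closed curves of controlled length near the end. Since $U$ is an annulus, the stable leaf through a point of $U$ either leaves every compact part of $U$ or returns. The first step is to show that no leaf of $W^s$ or $W^u$ can be properly embedded and go out to $e$ while staying in $U$. To prove this, take a periodic point $p\in U$ far out. The leaf $W^s(p)$ is contracted by $f^{\mathrm{per}(p)}$ toward $p$, so $W^s(p)$ is a line on which $f^{\mathrm{per}(p)}$ acts as a contraction. Then build local product boxes around the periodic orbit and use the recurrence to close up stable and unstable segments into a closed transversal of $W^s$ inside $U$. By Haefliger--Reeb and Poincaré--Bendixson on the planar region $U\cup\{e\}$ (a disk after compactifying the end by one point), a foliation of the annulus $U$ has limited possibilities. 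Either some leaf is a closed curve, or leaves spiral or go to the boundary or out the end. Closed leaves are impossible because $f$ is uniformly contracting on stable leaves and the periodic points are dense. Indeed, a closed stable leaf $c$ would satisfy $\operatorname{length}(f^n c)\to0$ while $f^n c$ remains essential around the fixed end, and completeness prevents essential curves near $e$ from having arbitrarily small length once we restrict to the compact region $\gamma$ up to a definite distance.

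The main obstacle is controlling the geometry near the end. Completeness alone does not give a lower bound on lengths of essential loops far out in $U$, since the end could be cusp-like. I would first try the following argument in the one-point compactification $\widehat U=U\cup\{e\}$, a closed disk. The foliations $W^s$ and $W^u$ extend to a pair of transverse foliations with one possible singularity at $e$, and $f$ extends continuously fixing $e$. The index of the singular point must be $1-\chi/\ldots$; more precisely, a Poincaré--Hopf count on the disk, using that $\gamma$ can be taken with controlled tangency, shows that the singularity at $e$ has index $1$, so locally the foliation is a center or a focus. Then rule out each case. In a center, the leaves are closed curves around $e$, which we already excluded. In a focus or a degenerate index-one configuration, leaves accumulate on $e$. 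Combining stable and unstable gives a point $e$ that is simultaneously a stable-type and an unstable-type singularity for a homeomorphism that contracts stable leaves and expands unstable ones. A periodic point $p$ near $e$ then has $W^s(p)$ spiraling into $e$, forcing the stable leaf to have infinite length within a bounded-distance region while being contracted uniformly. That contradicts completeness combined with uniform contraction along the orbit of the compact arc $\gamma$. Making this index argument rigorous, especially the extension of the foliations over the puncture, is the step I expect to need the most care.
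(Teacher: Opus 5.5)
Your proposal has genuine gaps; none of its lines of attack reaches a contradiction.

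\textbf{Where the steps fail.}
\begin{itemize}
\item The length plan in your second paragraph never supplies the upper bound that would conflict with blow-up, and you drop it.
\item Excluding closed stable leaves by length fails for the reason you give yourself: $f^n(c)$ can drift toward $e$, where essential loops may be arbitrarily short.
\item Your ``first step'' (no stable or unstable leaf goes properly out to $e$ inside $U$) is where all the content lies, and you only sketch it for leaves of periodic points. The leaves in question may contain no periodic point at all. In fact, density of periodic points forces unstable leaves whose positive end tends to $e$ to exist at every isolated planar end: follow $W^u_-(p)$ for a periodic $p$ deep in the end back to $\partial U$, then take a limit. So ruling such leaves out must use $f(e)=e$ essentially, and your sketch never does.
\item Nothing in Condition A lets you extend $W^s,W^u$ over $e$ with an isolated singularity. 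The ``index'' at $e$ is just the rotation of the line field along $\gamma$. Poincar\'e--Hopf gives $1$ when $\gamma$ is transverse to the foliation, but you have no way to produce such a $\gamma$. If you had an essential closed transversal to $W^s$ in $U$, trapping would already give a contradiction without any index theory.
\item Your closing contradiction is false. Since $S$ is complete and $U$ is noncompact, $e$ is at infinite distance. A stable leaf spiraling into $e$ therefore leaves every compact set and has infinite length, with no conflict with completeness or uniform contraction.
\end{itemize}

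\textbf{The missing idea.} The key fact you lack is that every oriented half-leaf $W^\sigma_\pm(p)$ through a periodic point $p$ is dense. This follows from local product structure, density of periodic points, and connectedness of $S\setminus\{p\}$. With it, any configuration where such a half-leaf is trapped in a region, or tends to $e$, is contradictory. The paper's argument then runs as follows.
\begin{itemize}
\item The set of unstable leaves whose positive end tends to $e$ is nonempty, via the nested compact gate sets $A_n\subset C_0$.
\item This set is finite: two nearby gate rays plus a short stable arc would trap an unstable half-leaf of a periodic point.
\item Because $f$ fixes $e$ and preserves the orientation of $W^u$, it permutes this finite set, so some such leaf satisfies $f^k(L)=L$.
\item Then $f^{-k}$ is a contraction of the leafwise-complete line $L$. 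The Banach fixed point theorem gives a periodic point on $L$ whose positive unstable half-leaf tends to $e$, so that half-leaf is not dense.
\end{itemize}
The contradiction thus comes from manufacturing a periodic point on a leaf escaping to $e$. It needs no metric control near the end, which is exactly what your approach relies on and cannot obtain.
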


As a consequence, complete open examples satisfying Condition A cannot have finite topological type.

\begin{corollary}
\label{cor:finite-type-torus}
Let $f:S\to S$ satisfy Condition A on a complete surface $S$.  If $S$ has finite genus and finitely many ends, then $S$ is homeomorphic to $\mathbb T^2$.
\end{corollary}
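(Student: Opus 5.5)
The plan is to split into the compact and open cases and reduce the open case to Theorem~\ref{thm:no-planar-periodic-end}.

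If $S$ is closed, the classical argument from the introduction applies. The hyperbolic splitting gives a nonsingular line field, so after passing to the orientation double cover of $E^u$ we get a nowhere-vanishing vector field. Poincar\'e--Hopf and multiplicativity of $\chi$ under finite covers then give $\chi(S)=0$, so $S$ is $\mathbb T^2$ or the Klein bottle. To exclude the Klein bottle, recall that every nonsingular foliation of the Klein bottle has a compact leaf. Consider the foliation $W^u$ and the finitely many compact leaves among the closed leaves of such a foliation (there are at most finitely many circle leaves in the relevant isotopy classes). Some iterate $f^k$ maps one such circle leaf $\gamma$ into a compact leaf. Its length is then multiplied by at least $C\lambda^{kn}$ under $f^{kn}$. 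A cleaner route avoids counting: a compact unstable leaf $\gamma$ has $f^{-n}(\gamma)$ of length at most $C\lambda^{-n}\,\mathrm{length}(\gamma)\to 0$. But $f^{-n}(\gamma)$ is a circle leaf of a nonsingular foliation, which is essential and hence has length bounded below by the injectivity radius on a compact surface, which is a contradiction. This argument is cleaner and I would use it; it also handles the torus consistently (no compact leaves there either, which is fine).

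If $S$ is open, suppose it has finite genus and finitely many ends $e_1,\dots,e_m$ with $m\ge1$. The end space is then finite and discrete, so every end is isolated. Since the genus is finite, a compact subsurface carries all the genus, and each end has a neighborhood of genus zero, so every end is planar. The homeomorphism $f$ induces a permutation of the finite end set, so every end is periodic, with period dividing $m!$. Taking any end, Theorem~\ref{thm:no-planar-periodic-end} says it cannot be periodic, which is a contradiction. Hence $S$ has no ends, so $S$ is compact, and the first case gives $S\cong\mathbb T^2$.

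The only point needing care is verifying that the induced action on ends is well defined and permutes them. This is standard: a homeomorphism maps exhaustions by compact sets to exhaustions, and so it induces a homeomorphism of the end space. The other point is the Klein bottle exclusion, where essentiality of circle leaves, via Poincar\'e--Bendixson/Reeb, is the key input: a null-homotopic circle leaf would bound a disk with a nonsingular foliation tangent to its boundary, which is impossible by Poincar\'e--Hopf. I expect no real obstacle; the corollary is essentially bookkeeping on top of the theorem.
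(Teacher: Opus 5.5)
Your proof is correct and follows the paper's argument. In the open case every end is isolated, planar, and periodic under the induced permutation of the finite end space, which contradicts Theorem~\ref{thm:no-planar-periodic-end}; the closed case is the classical Poincar\'e--Hopf plus Klein-bottle rigidity recalled in the introduction. Your shrinking argument, $\operatorname{length}(f^{-n}\gamma)\le C^{-1}\lambda^{-n}\operatorname{length}(\gamma)\to 0$, set against the lower bound $2\,\mathrm{inj}(S)$ on the length of essential circle leaves, is a valid way to make the paper's ``incompatible with uniform expansion'' remark explicit. Delete the abandoned counting attempt, though: its claim that there are only finitely many circle leaves is false in general.
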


Indeed, if $S$ were open with finite genus and finitely many ends, then its
genus would be contained in a compact subsurface. Hence every end of $S$
would admit a genus-zero neighborhood, so every end would be planar. Since
the end space is finite, every end is isolated. Moreover, the action of $f$
on the finite end space makes every end periodic. This contradicts
Theorem~\ref{thm:no-planar-periodic-end}. Therefore $S$ must be closed, and
the compact surface rigidity theory implies that
$$
S\cong\mathbb T^2.
$$

The proof of Theorem~\ref{thm:no-planar-periodic-end} is given in
Section~\ref{sec:planar-ends}. The argument combines the density of stable
and unstable half-leaves through periodic points with a trapping construction
near an isolated planar end. The finite-type corollary then follows
immediately.

\section{Isolated planar periodic ends}
\label{sec:planar-ends}

In this section we prove Theorem~\ref{thm:no-planar-periodic-end}. After passing, if necessary, to a finite cover on which both invariant line fields are orientable, and replacing the lifted map by a positive iterate, we assume throughout this section that $W^s$ and $W^u$ are oriented and that $f$ preserves these orientations. This causes no loss of generality. Condition A is preserved under finite covers and positive iterates, and an isolated planar periodic end lifts to finitely many isolated planar
ends, one of which is periodic for a positive iterate of the lifted map. We retain the notation $S$, $f$, $W^s$, and $W^u$ for the resulting system. 

We first record an elementary consequence of Condition A that will be used
repeatedly.

Note that the stable and unstable leaves through a periodic point are
noncompact. Indeed, suppose that a periodic point $p$ has period $m$ and
that $W^s(p)$ is compact. Then $W^s(p)$ is a circle and
$$
f^m(W^s(p))=W^s(p).
$$
On the other hand, for every sufficiently large $n$, the stable estimate
gives
$$
\operatorname{length}\bigl(f^{mn}(W^s(p))\bigr)
\le
C\lambda^{-mn}\operatorname{length}\bigl(W^s(p)\bigr)
<
\operatorname{length}\bigl(W^s(p)\bigr),
$$
a contradiction. The unstable case follows in the same way by applying
the corresponding estimate to $f^{-m}$. Since a connected one-dimensional
manifold without boundary is homeomorphic either to $S^1$ or to $\mathbb R$,
both $W^s(p)$ and $W^u(p)$ are homeomorphic to $\mathbb R$. Since the
foliations are oriented, the positive and negative half-leaves through $p$
are therefore well defined. For $\sigma\in\{s,u\}$, we denote them by
$W^\sigma_\pm(p)$.

\begin{lemma} 
\label{lem:periodic-half-leaves-dense} If $p$ is a periodic point, then each of the half-leaves $$ W^s_+(p),\qquad W^s_-(p),\qquad W^u_+(p),\qquad W^u_-(p) $$ is dense in $S$. \end{lemma}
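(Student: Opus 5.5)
The plan is a spectral-decomposition-style argument that uses only density of periodic points, local product structure, and the orientation conventions fixed above. We treat $W^u_+(p)$; the other three half-leaves follow by the same argument, exchanging $f$ and $f^{-1}$ or reversing orientations. Replacing $f$ by $f^m$, where $m$ is the period of $p$, we may assume $f(p)=p$. Condition A is preserved, except that periodic points remain dense as a set; they are no longer all fixed. Since $f$ fixes $p$ and preserves the orientation of $W^u$, we have $f(W^u_+(p))=W^u_+(p)$. Uniform expansion then gives
$$
W^u_+(p)=\bigcup_{n\ge 0} f^n\bigl(W^u_{+,\mathrm{loc}}(p)\bigr)
$$
for any short initial segment $W^u_{+,\mathrm{loc}}(p)$. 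Let $K=\overline{W^u_+(p)}$. It is closed and $f$-invariant, and we must show $K=S$. Because $S$ is connected, it suffices to show that $K$ is open. The main obstacle is this openness: we must show that $K$ contains a neighborhood of each of its points, which is not automatic from invariance alone.

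First, a preliminary step. Near any periodic point $q$ of period $k$, local product structure and the $\lambda$-lemma (inclination lemma), applied to $f^k$ on a product chart, will show the following. If $W^u_+(p)$ crosses the local stable leaf $W^s_{\mathrm{loc}}(q)$ transversally, then the forward iterates $f^{kn}$ of a small arc of $W^u_+(p)$ accumulate in $C^0$ on a whole local unstable segment of $q$ on one side, namely the side determined by the orientation. Since these iterates lie in $W^u_+(p)$, which is $f$-invariant, we get $W^u_{\pm,\mathrm{loc}}(q)\subset K$ for the corresponding sign. Iterating the same argument from that segment gives $W^u_\pm(q)\subset K$.

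Second, we show openness at a point $x\in K$. Take a small product chart $R$ around $x$, a rectangle bounded by stable and unstable arcs. Pick a periodic point $q\in R$ near $x$, which exists by density. Since $x\in K$, $W^u_+(p)$ contains unstable plaques of $R$ arbitrarily close to $x$, and each crosses $W^s_{\mathrm{loc}}(q)$. By the preliminary step, $K$ then contains an unstable half-leaf of $q$. The key additional point is to get both sides. We would argue that within $R$, plaques of $W^u_+(p)$ accumulating on $x$ from both stable-sides can be found; if not, $K\cap R$ would have a boundary unstable plaque, and we would run the same argument for its stable leaf to derive a contradiction.

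Running the $\lambda$-lemma with the stable plaques as well, we would then show that $K$ contains the local stable leaf through $q$ thickened by unstable plaques, that is, all of a smaller sub-rectangle around $q$, and by the choice of $q$ near $x$ that sub-rectangle contains $x$. I expect this last step, a ``two-sided accumulation'' argument excluding one-sided boundary plaques of $K$, to be the hardest. I would first try to handle it by using density of periodic points on both sides of such a boundary leaf together with the fact, recorded above, that periodic leaves are noncompact lines and cannot bound a trapped region. This would show $K$ is open, hence $K=S$.
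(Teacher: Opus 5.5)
Your overall frame ($K=\overline{W^u_+(p)}$, invariance, openness, connectedness) matches the paper's. However, the openness step, which you correctly identify as the crux, is not established. The ``two-sided accumulation'' argument excluding boundary unstable plaques is only sketched. The step after it, ``running the $\lambda$-lemma with the stable plaques'' so that $K$ contains the local stable leaf through $q$ thickened by unstable plaques, has no mechanism behind it. $K$ is the closure of an unstable half-leaf, and nothing you prove places any stable segment inside it. The $\lambda$-lemma for stable arcs controls iterates of stable arcs, which are not known to lie in $K$. So the proposal never shows that $K$ contains a neighborhood of $x$.

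The missing idea is that you never need to build a rectangle inside $K$ by hand. Since $K$ is closed and periodic points are dense, it suffices to show that \emph{every} periodic point of a small product neighborhood $R$ of $x$ lies in $K$. Your preliminary step already gives this, and the $\lambda$-lemma is not needed. If $q\in R$ has period $k$, an unstable plaque of $R$ through a point of $W^u_+(p)$ near $x$ meets $W^s_{\mathrm{loc}}(q)$ at some $z$. Then $f^{kn}(z)\in W^u_+(p)$ by invariance, and $f^{kn}(z)\to q$ by the stable estimate, so $q\in K$. Apply this to all periodic points of $R$ at once rather than to a single chosen $q$, and density finishes the argument.

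One further point needs care. The plaque through a point of $W^u_+(p)$ lies in $W^u_+(p)$ only if it avoids $p$. The paper therefore takes $R$ with $p\notin R$, proves that $K\setminus\{p\}$ is open in $S\setminus\{p\}$, and concludes using connectedness of $S\setminus\{p\}$. Your formulation ``show $K$ is open'' leaves openness at the point $x=p$ itself unaddressed.
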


\begin{proof}

It is enough to prove the statement for $W^u_+(p)$. The other cases follow by
applying the same argument to $f^{-1}$, and to the opposite
orientations. Replacing $f$ by a further positive iterate, we may assume that
$p$ is fixed and that $f(W^u_+(p))=W^u_+(p)$.

Let
\[
H=\overline{W^u_+(p)}.
\]
We claim that $H\setminus\{p\}$ is open in $S\setminus\{p\}$.
Let $x\in H\setminus\{p\}$, and choose a small local product
neighborhood $U$ of $x$ with $p\notin U$. Since periodic points are dense,
it is enough to show that every periodic point $q\in U$ belongs to $H$.

Choose points
\[
x_j\in W^u_+(p)\cap U,
\qquad
x_j\to x.
\]

For each $j$, the local unstable plaque through $x_j$ in $U$ is contained
in $W^u_+(p)$, since $p\notin U$. By local product structure, this plaque
meets the local stable plaque through $q$ at a point $z_j$. Hence
\[
z_j\in W^u_+(p)\cap W^s(q).
\]

If $m$ is the period of $q$, then
\[
f^{mn}(z_j)\to q
\]
as $n\to\infty$. Since $W^u_+(p)$ is $f$-invariant, it follows that
$q\in H$.

Thus $H\setminus\{p\}$ is open in $S\setminus\{p\}$. It is also closed
there, since $H$ is closed. Since $S\setminus\{p\}$ is connected and
$H\setminus\{p\}$ is nonempty,
\[
H\setminus\{p\}=S\setminus\{p\}.
\]
As $p\in H$, we conclude that
\[
H=S.
\]
Therefore $W^u_+(p)$ is dense in $S$.

\end{proof}
\begin{remark}
The proof is a standard local-product argument for hyperbolic surface
dynamics with dense periodic points. The same argument appears, for instance,
in \cite[Lem.~4.3]{HammerlindlHRU2020} in the cylinder case.
\end{remark}

\begin{proposition}
\label{prop:all-leaves-lines-complete}
Every stable and unstable leaf is homeomorphic to $\mathbb R$. Moreover,
each leaf is complete with respect to the leafwise metric induced by the
Riemannian metric on $S$.
\end{proposition}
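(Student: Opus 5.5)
The plan has two parts: first show that no leaf is a circle, then show that each end of a leaf has infinite leafwise length. I will use only the local product structure of $W^s,W^u$, density of periodic points, the hyperbolic estimates in Condition~A, and completeness of the metric on $S$. I treat a compact unstable leaf first; the stable case is identical with $f$ replaced by $f^{-1}$.

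\emph{Step 1: no compact leaves.} Suppose some unstable leaf $L$ is compact. Since it is a connected one-dimensional manifold, it is a circle. Pick $x\in L$ and a small local product neighborhood $U$ of $x$. By density, pick a periodic point $q\in U$ of period $m$. By local product structure, the local stable plaque of $q$ meets the unstable plaque of $x$, which lies in $L$, at a point $y$. Then
$$
f^{-mn}(y)\to q \quad (n\to\infty),
$$
because $f^{-mn}(y)\in f^{-mn}(W^s(q))=W^s(q)$ is contracted towards $q$. The circles $L_n=f^{-mn}(L)$ are unstable leaves through $f^{-mn}(y)$. The unstable estimate applied to $f^{-mn}$ gives
$$
\operatorname{length}(L_n)\le C\lambda^{-mn}\operatorname{length}(L)\to 0 .
$$
Now fix a local product chart $V$ around $q$ containing a closed metric ball $\overline{B}(q,r)$. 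For large $n$, $L_n$ passes within $r/2$ of $q$ and has length less than $r/2$, so $L_n\subset B(q,r)\subset V$. A connected subset of an unstable leaf contained in $V$ lies in a single unstable plaque, and plaques are arcs. Hence $L_n$ would be a circle embedded in an arc, which is impossible. So every leaf is homeomorphic to $\mathbb R$.

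\emph{Step 2: leafwise completeness.} Let $\ell$ be a leaf, parametrized by leafwise arclength $\gamma:(a,b)\to S$, and suppose $b<\infty$. Since $\gamma$ is $1$-Lipschitz into $S$ and $S$ is complete, $\gamma(t)\to w$ for some $w\in S$ as $t\to b$. Take a local product chart $V$ around $w$. For $t$ close to $b$, the segment $\gamma([t,b))$ has length smaller than the radius of a ball around $w$ contained in $V$, so it lies in $V$. Being connected, it lies in one plaque $P$, which is an arc. Since $\gamma(t)\to w$ and plaques are closed in $V$, $w$ lies in $P$. Therefore $\gamma$ extends past $b$ inside $P\subset\ell$ through the point $w$, which contradicts the maximality of the arclength parametrization of $\ell$. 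The same argument handles $a>-\infty$. Hence both ends have infinite length, and $\ell$ with its leafwise metric is isometric to $\mathbb R$, hence complete.

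\emph{Main obstacle.} The delicate step is Step 1. In a noncompact $S$, the shrinking circles could a priori escape to infinity, where no uniform injectivity or product-chart size is available. The device that avoids this is to anchor the circles: use the stable manifold of a nearby periodic point $q$ so that the iterated circles are forced to accumulate at $q$, which lies in a fixed compact product chart. A secondary point to check in Step 2 is that the tail of a finite-length end cannot hop between different plaques of $V$. This holds because the tail is connected and stays inside $V$.
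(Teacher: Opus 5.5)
\textbf{Step 1 has a genuine gap: the anchoring claim $f^{-mn}(y)\to q$ is false.} The point $y$ lies on the local \emph{stable} plaque of $q$, so it is the \emph{forward} iterates $f^{mn}(y)$ that converge to $q$. Under $f^{-mn}$ the stable leaf is expanded: the leafwise distance along $W^s(q)$ from $q$ to $f^{-mn}(y)$ is at least $C^{-1}\lambda^{mn}d_s(y,q)$.

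Nothing pulls $f^{-mn}(y)$ toward $q$. In fact, $f^{-mn}(y)\to q$ would place $y$ in the unstable manifold of $q$, forcing $L=W^u(q)$. That leaf is already known to be noncompact by the length argument for periodic leaves given before Lemma~\ref{lem:periodic-half-leaves-dense}.

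So the two halves of your argument need opposite directions of time. The unstable circle shrinks only under $f^{-mn}$, while the anchor converges only under $f^{mn}$. In forward time the circles $f^{mn}(L)$ do pass near $q$, but their lengths grow like $\lambda^{mn}$, so the ``circle inside one plaque'' contradiction is unavailable. The obstacle you named, that the shrinking circles $f^{-n}(L)$ might escape to infinity, is therefore exactly what your proof leaves open. Your closing argument actually shows they \emph{must} eventually leave every compact set, but that alone yields no contradiction.

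\textbf{What is missing is a global recurrence input, not just the pointwise hyperbolic estimates.} The paper supplies it through Lemma~\ref{lem:periodic-half-leaves-dense} and a holonomy dichotomy around the compact leaf $L$:
\begin{enumerate}
\item If the holonomy on a transversal is the identity on one side of $L$, nearby leaves form an annulus of compact leaves. A periodic point in it would have a compact leaf, which is impossible.
\item Otherwise, some band of leaves near $L$ spirals monotonically onto a compact leaf. A periodic point in that band then has a half-leaf confined to a tubular neighborhood, contradicting density of half-leaves.
\end{enumerate}
You need to replace Step~1 with an argument of this kind.

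Your Step~2 is correct and is essentially the paper's completeness argument. The paper runs the same local-plaque reasoning on a leafwise Cauchy sequence; you run it on a finite-length end of the arclength parametrization.
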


\begin{proof}
We prove the statement for $W^s$. The unstable case is identical.

Suppose that $W^s$ has a compact leaf $L$. Since $W^u$ is oriented,
$W^s$ is transversely oriented. Choose a small tubular neighborhood $N$
of $L$ with compact closure, and a transverse interval through a point of
$L$, with coordinate $0$ corresponding to $L$. The holonomy around $L$
is represented by an orientation-preserving homeomorphism
$$
h:I_0\longrightarrow I_1,
\qquad h(0)=0,
$$
where $I_0$ and $I_1$ are intervals containing $0$. Choose a smaller interval $K$ whose closure is contained in $I_0\cap I_1$.
Then both $h$ and $h^{-1}$ are defined on $K$. We do not assume that
$h(K)\subset K$.

If $h$ is the identity on a one-sided neighborhood of $0$, its saturation
in $N$ is an open holonomy band, topologically an annulus, foliated by
compact stable leaves. By density
of periodic points, this band contains a periodic point, contradicting the
fact established above that the stable leaf of every periodic point is
homeomorphic to $\mathbb R$.

Otherwise choose a component
$$
J\subset K\setminus\operatorname{Fix}(h)
$$
sufficiently close to $0$. The endpoint of $J$ lying toward $0$, say $c$,
belongs to $K$ and satisfies $h(c)=c$. Since $h$ is increasing and has no
fixed point in $J$, either $h(t)>t$ for all $t\in J$ or $h(t)<t$ for all
$t\in J$. For $t\in J$ sufficiently close to $c$, monotonicity implies that, in the
appropriate forward or backward direction, each successive iterate lies
strictly between $t$ and $c$. Hence all these iterates are defined and
converge monotonically to $c$.

The saturation of $J$ is an open holonomy band. By density of periodic
points, choose a periodic point $q$ in this band sufficiently close to the
compact leaf corresponding to $c$. One of the stable half-leaves of $q$
converges to that compact leaf, and hence is not dense in $S$. This
contradicts Lemma~\ref{lem:periodic-half-leaves-dense}. Thus $W^s$ has no compact leaves.
Since every connected one-dimensional manifold without boundary is
homeomorphic either to $S^1$ or to $\mathbb R$, every stable leaf is
homeomorphic to $\mathbb R$.

It remains to prove leafwise completeness. Let $L$ be a stable leaf, let
$d_L$ be its leafwise distance, and let $(x_j)$ be a $d_L$-Cauchy sequence.
Since
$$
d(x_j,x_k)\le d_L(x_j,x_k),
$$
$(x_j)$ is Cauchy in the ambient metric. Completeness of $S$ gives
$x_j\to x$ for some $x\in S$.

Choose a foliation box $B$ containing $x$, and a smaller foliation box
$B_0$ containing $x$ whose closure is compact and contained in $B$. Set
$$
\rho=d\bigl(\overline{B_0},S\setminus B\bigr)>0.
$$
For all sufficiently large $j,k$,
$$
x_j,x_k\in B_0,
\qquad
d_L(x_j,x_k)<\rho.
$$
The unique stable arc in $L$ joining $x_j$ to $x_k$ therefore cannot leave
$B$, since otherwise its length would be at least $\rho$. Hence all
sufficiently large $x_j$ lie on the same stable plaque of $B$. Passing to
the limit in the foliation box gives $x\in L$, and the local leaf topology
is induced by the leafwise metric. Therefore
$$
d_L(x_j,x)\longrightarrow0.
$$
Thus $L$ is complete.

The same argument applies to $W^u$.
\end{proof}

By Proposition~\ref{prop:all-leaves-lines-complete}, every stable and unstable
leaf is an oriented copy of $\mathbb R$. We shall use this convention
throughout the remainder of the section. Thus each leaf carries its natural
linear order and has well-defined positive and negative ends, and
$W^\sigma_\pm(x)$ denotes the corresponding oriented half-leaves.

We shall also use the following elementary fact about one-dimensional
foliations.

\begin{lemma}
\label{lem:no-null-transverse-loop}
Let $\mathcal F$ be a nonsingular one-dimensional foliation on a surface.
There is no $C^1$ simple closed curve which is everywhere transverse to
$\mathcal F$ and bounds a topological disc.
\end{lemma}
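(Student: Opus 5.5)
The plan is to argue by contradiction via a Poincaré--Hopf index count on the closed disc. Suppose $\gamma$ is a $C^1$ simple closed curve, everywhere transverse to $\mathcal F$, bounding a closed topological disc $D$. By the Schoenflies theorem (and smoothing, since $\gamma$ is $C^1$), $D$ is a closed embedded disc with boundary $\gamma$. Because $D$ is simply connected, the line field $T\mathcal F$ restricted to $D$ is orientable. So I would choose a continuous nowhere-vanishing vector field $X$ on $D$ tangent to $\mathcal F$. If the foliation is only $C^0$, I would first replace $X$ by a nearby smooth nonvanishing field that is still transverse to $\gamma$; transversality is an open condition on the compact set $\gamma$, so this causes no trouble.

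Next, I would use transversality to control the boundary. Along $\gamma$ the field $X$ is never tangent to $\gamma$, so $X$ points strictly into $D$ or strictly out of $D$ at each point of $\gamma$. Since $\gamma$ is connected and the sign of the inward normal component is continuous and nonzero, $X$ points uniformly inward along all of $\gamma$ or uniformly outward. After replacing $X$ by $-X$ if necessary, $X$ points outward everywhere on $\partial D$.

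Then I would apply the Poincaré--Hopf theorem for manifolds with boundary and an outward-pointing vector field. It says that the sum of the indices of the zeros of $X$ equals $\chi(D)=1$. But $X$ has no zeros, so the sum is $0$, which is a contradiction.

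For a more elementary route that avoids Poincaré--Hopf, I would identify $D$ with the unit disc and consider the map $\theta\mapsto X(\gamma(\theta))/|X(\gamma(\theta))|$. This map is homotopic, through nonvanishing fields, to the outward normal, which has degree $1$. On the other hand, it extends over $D$ without zeros and is therefore null-homotopic, so it has degree $0$. This contradiction gives the same conclusion.

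The main obstacle I expect is the regularity bookkeeping. First, I need to make sure the orientability of $\mathcal F$ on $D$ and the smoothing of $X$ keep both transversality and nonvanishing. Second, I need to justify the identification of $D$ with a standard disc via Schoenflies, so that the degree argument applies. Both steps are standard, since $D$ is compact and the conditions involved are open.
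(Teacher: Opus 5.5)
Your proof is correct, but it takes a different route from the paper's. Both arguments orient $\mathcal F|_D$ by simple connectivity, and both use transversality plus connectedness of $\gamma$ to get a constant crossing sign along $\partial D$. The paper then takes the \emph{inward} orientation, calls $D$ a trapping disc, and concludes by citing Poincar\'e--Bendixson for nonsingular foliations of the plane. You take the outward orientation and conclude by Poincar\'e--Hopf, or by the degree count comparing $X/|X|$ with the outward normal.

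Your version is purely topological. It uses only a continuous nonvanishing field on $D$ transverse to $\partial D$, and needs neither integrability nor a flow. In fact you do not need to smooth $X$ at all, since the degree argument works verbatim for continuous fields.

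The paper's version is shorter on the page but delegates the work to Poincar\'e--Bendixson. Once unpacked, that theorem ends in exactly the index/fixed-point fact you use directly: a forward-invariant disc, or a closed orbit in the plane, must enclose a singularity. In exchange, the paper's route does not need a tangent line field, so it would survive for topological foliations with a topological notion of transversality. You build $X$ from $T\mathcal F$, which costs nothing here since $W^s$ and $W^u$ are tangent to the continuous bundles $E^s$ and $E^u$.

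One precision for your elementary version: the identification of $D$ with the unit disc should be a $C^1$ diffeomorphism, not merely the homeomorphism from Schoenflies. The claim that the outward normal has degree $1$ is about tangent data. Such a diffeomorphism is available after a $C^1$-small smoothing of $\gamma$, which preserves transversality. The intrinsic Poincar\'e--Hopf formulation avoids this issue altogether.
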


\begin{proof}
Suppose that such a curve $\gamma$ bounds a disc $D$. Since $D$ is
contractible, the foliation $\mathcal F|_D$ is orientable. Transversality
implies that, after reversing the orientation if necessary, the foliation
points into $D$ along all of $\partial D$. Thus $D$ is a trapping disc,
which contradicts the Poincar\'e--Bendixson theorem for a nonsingular
foliation of the plane.
\end{proof}

We now prove the annular trapping statement needed for the end argument.
Let $e$ be an isolated planar end fixed by $f$. Since $e$ is isolated and
planar, it admits a closed annular neighborhood
$
U\cong S^1\times[0,\infty)
$ with $C^1$ boundary $C=C_0=\partial U$. Choose a nested exhaustion of $U$
by essential $C^1$ simple closed curves
$$
C=C_0,\ C_1,\ C_2,\ldots
$$
such that $C_n$ tends monotonically to the end $e$. We write $U_n$ for the
closed annulus bounded by $C_0$ and $C_n$.

After an arbitrarily small perturbation preserving the nested exhaustion, we
may assume that each $C_n$ is transverse to $W^u$ except at finitely many
isolated tangencies. We refer to this as general position with respect to
$W^u$.

For $n\ge1$, define
$$
A_n=
\left\{
x\in C_0:
\text{ there exists }y\in C_n\cap W^u_+(x)
\text{ such that }[x,y]_u\subset U_n
\right\},
$$
where $[x,y]_u$ denotes the unstable arc from $x$ to $y$ along
$W^u_+(x)$.

\begin{lemma}
\label{lem:no-trapped-unstable-arcs}
For every $n\ge 1$, no unstable half-leaf is contained in $U_n$. Moreover,
there exists $L_n>0$ such that every positively oriented unstable arc
contained in $U_n$ and starting on $C_0$ has leafwise length at most $L_n$.
\end{lemma}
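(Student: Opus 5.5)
The plan is to prove the first assertion with a Poincaré--Bendixson type argument adapted to the annulus $U_n$, and then to get the second by compactness.

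\textbf{Step 1: a trapped half-leaf recurs.} Suppose an unstable half-leaf $W^u_+(x)$ is contained in the compact annulus $U_n$. By Proposition~\ref{prop:all-leaves-lines-complete} it is a complete half-line, hence of infinite length. Parametrize it by arclength. Since $U_n$ is compact, there is a point $z\in U_n$ and times $t_k\to\infty$ with $W^u_+(x)(t_k)\to z$. Choose a small local product box $B$ around $z$ and a short stable arc $\tau\subset B$ through $z$, which is transverse to $W^u$. Then $W^u_+(x)$ crosses $\tau$ at two distinct points $a$ and $b$, at least one of them later than the other along the half-leaf. It is not the same point twice, because the leaf is a line rather than a circle.

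\textbf{Step 2: build a closed transversal.} Concatenate the unstable arc from $a$ to $b$ with the stable subarc $[b,a]\subset\tau$. By the standard smoothing trick, using that $W^u$ is oriented and crosses $\tau$ in a consistent direction, this closed curve can be perturbed into a $C^1$ closed curve $\gamma$ that is everywhere transverse to $W^u$. Choosing the first return to $\tau$ makes $\gamma$ simple. It lies in a small neighborhood of $U_n$, which we may take to be an annulus inside $U$.

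\textbf{Step 3: exclude both possibilities for $\gamma$.}
\begin{itemize}
\item If $\gamma$ bounds a disc, this contradicts Lemma~\ref{lem:no-null-transverse-loop}.
\item Otherwise $\gamma$ is essential in the annulus, hence isotopic to $C_0$, and so separates $S$ into two components. One contains the end $e$; the other contains $S\setminus U$. Both are open and nonempty.
\end{itemize}
In the essential case, every unstable leaf crossing $\gamma$ does so in the same direction, since $W^u$ is oriented and $\gamma$ is transverse to it. Hence a unstable half-leaf can pass from one side of $\gamma$ to the other at most once. Now take any periodic point $p$. By Lemma~\ref{lem:periodic-half-leaves-dense}, $W^u_+(p)$ is dense in $S$. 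It must therefore enter each of the two complementary open sets infinitely often, and so it must cross $\gamma$ in both directions. This is a contradiction, so no unstable half-leaf is contained in $U_n$.

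\textbf{Step 4: uniform length bound.} Suppose the lengths are unbounded. Then there are points $x_k\in C_0$ and positively oriented unstable arcs $[x_k,y_k]_u\subset U_n$ with length $\ell_k\to\infty$. Since $C_0$ is compact, pass to a subsequence with $x_k\to x\in C_0$. Fix any $T>0$. By continuity of the foliation $W^u$ in product charts and leafwise completeness, the arc of $W^u_+(x)$ of length $T$ is the limit of the initial arcs of length $T$ of $W^u_+(x_k)$. These lie in $U_n$ for large $k$, and $U_n$ is closed, so that arc lies in $U_n$. Since $T$ is arbitrary, $W^u_+(x)\subset U_n$, contradicting the first part. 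Hence some $L_n$ works.

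\textbf{Main obstacle.} The delicate step is Step 2: producing a simple $C^1$ transverse loop from the recurrent arc, and making sure it stays in an annular region where the disc/essential dichotomy applies. I would handle this with the standard first-return construction along the stable transversal $\tau$, then a local rounding of the two corners inside the box $B$.
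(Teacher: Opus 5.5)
Your proof is correct and has the same skeleton as the paper's. Both take a recurrent return of the trapped half-leaf to a short transversal, form a closed curve in an annulus, split into the disc and essential cases, and contradict Lemma~\ref{lem:periodic-half-leaves-dense}. Your Step~4 is essentially the paper's compactness argument. The difference is how you handle the closed curve.

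\textbf{The paper's route.} The paper does no smoothing. It works directly with the piecewise curve $\Gamma=\beta\cup\sigma$, an unstable arc plus a transverse arc. Unstable leaves cannot cross $\beta$, and they cross $\sigma$ with constant sign, so one complementary component of $\Gamma$ is trapped in one unstable direction. A periodic point placed in that component then has a non-dense unstable half-leaf. This treats the disc and essential cases uniformly and needs neither Poincar\'e--Bendixson nor a tilting construction.

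\textbf{Your route.} You tilt $\beta$ into a genuine $C^1$ closed transversal to $W^u$. You dispose of the disc case with Lemma~\ref{lem:no-null-transverse-loop}. In the essential case you use the one-way crossing principle. This gives a contradiction from \emph{any} periodic point, with no need to place it in a particular component. It is the same mechanism the paper uses later, for $W^s$, in Lemma~\ref{lem:gates-give-proper-rays}. The price is the smoothing step. That step is standard and works because the holonomy along $\beta$ preserves the orientation of $\tau$, which is what your ``consistent direction'' remark supplies.

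\textbf{One small correction.} If your accumulation point $z$ lies on $C_0$, then $\tau$ and the loop $\gamma$ may leave $U$, so ``an annulus inside $U$'' is not quite right. Either fix below works; nothing else in your argument depends on where $\gamma$ sits.
\begin{itemize}
\item Work in an open annular neighborhood of $U_n$ obtained by adding an outer collar of $C_0$.
\item Follow the paper: the accumulation set is $W^u$-saturated and, by general position, not contained in $\partial U_n$. Hence $z$ can be chosen in the interior of $U_n$.
\end{itemize}
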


\begin{proof}
Suppose first that an unstable half-leaf is contained in $U_n$. We treat the
positive half-leaf case, the other case being identical. Let
$W^u_+(x)\subset U_n$. Since \(U_n\) is compact and the unstable half-leaf is contained in
\(U_n\), the ray cannot escape every compact subset of \(U_n\).
Hence it has an accumulation point. Because the accumulation set of $W^u_+(x)$ is saturated by $W^u$, and
$\partial U_n$ contains no unstable arc by general position, this
accumulation set cannot be contained in $\partial U_n$. Hence
$W^u_+(x)$ has an accumulation point in the interior of $U_n$.

Choose a foliation box around this point and a short transversal $T$ that is
met at least twice by $W^u_+(x)$. Let $a,b\in T$ be two consecutive
intersections along the half-leaf. Denote by
$
\beta=[a,b]_u
$ the unstable segment joining them and by $\sigma\subset T$ the subarc joining
$a$ to $b$. Then
$
\Gamma=\beta\cup\sigma
$
is a simple closed curve contained in the annular neighborhood $U$.

The curve $\Gamma$ separates $S$. Indeed, if $\Gamma$ is null-homotopic in
$U$, it bounds a disc, while if it is essential in $U$, it separates the end
$e$ from the complement of $U$. Along the transversal $\sigma$, the unstable
direction crosses with constant sign, while no unstable leaf can cross the
unstable segment $\beta$. It follows that one component of
$S\setminus\Gamma$ is trapped in one unstable direction.

Choose a periodic point $p$ in this component. If the component is positively
trapped, then $W^u_+(p)$ remains in its closure. If it is negatively trapped,
the same holds for $W^u_-(p)$. In either case this contradicts
Lemma~\ref{lem:periodic-half-leaves-dense}, which asserts that both unstable
half-leaves of a periodic point are dense. Therefore no unstable half-leaf is
contained in $U_n$.

We now prove the uniform length bound. Suppose, to the contrary, that no such
$L_n$ exists. Then there are points $x_j\in C_0$ and positively oriented
unstable arcs
$
\alpha_j\subset U_n
$
starting at $x_j$ such that
$
\operatorname{length}_u(\alpha_j)\longrightarrow\infty.
$

After passing to a subsequence, $x_j\to x\in C_0$. Fix $R>0$. For all
sufficiently large $j$, the initial unstable segment of leafwise length $R$
starting at $x_j$ is contained in $\alpha_j$ and hence in $U_n$. By continuity
of the unstable foliation, these segments converge to the initial segment of
$W^u_+(x)$ of leafwise length $R$. Since $U_n$ is closed, this limiting
segment is contained in $U_n$.

As $R>0$ is arbitrary, we obtain
$
W^u_+(x)\subset U_n,
$
contradicting the first part of the proof. Hence there exists $L_n>0$ such
that every positively oriented unstable arc contained in $U_n$ and starting
on $C_0$ has leafwise length at most $L_n$.
\end{proof}

\begin{lemma}
\label{lem:gate-sets}
For every \(n\ge 1\), the set \(A_n\) is nonempty and compact. Moreover,
\[
A_{n+1}\subset A_n.
\]
In particular, the family \(\{A_n\}_{n\ge1}\) has the finite intersection
property. Consequently,
\[
A:=\bigcap_{n\ge 1} A_n
\]
is nonempty.
\end{lemma}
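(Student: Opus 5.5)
Three things have to be shown: $A_n$ is nonempty, $A_n$ is closed in the compact circle $C_0$, and $A_{n+1}\subset A_n$. The finite intersection property and $A\neq\emptyset$ then follow from Cantor's intersection theorem in $C_0$.

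\emph{Nonemptiness.} The plan is to use Lemma~\ref{lem:periodic-half-leaves-dense} together with Lemma~\ref{lem:no-trapped-unstable-arcs}.
\begin{enumerate}
\item Choose a periodic point $p$ in the interior of $U_n$.
\item By Lemma~\ref{lem:no-trapped-unstable-arcs}, the half-leaf $W^u_-(p)$ is not contained in $U_n$, so it leaves $U_n$ through $C_0$ or $C_n$.
\item By density, $W^u_+(p)$ also leaves $U_n$. More to the point, by Lemma~\ref{lem:periodic-half-leaves-dense} the whole leaf $W^u(p)$ accumulates on points beyond $C_n$ and outside $C_0$.
\item Consider the maximal arc of $W^u(p)\cap U_n$ through $p$. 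Its two endpoints lie on $C_0\cup C_n$.
\item If one endpoint lies on $C_0$ and the other on $C_n$, orient the arc from its $C_0$-endpoint. If the orientation is positive, that $C_0$-endpoint is in $A_n$.
\item If both endpoints lie on the same curve, or the orientation is wrong, vary the periodic point. Take $p$ near $C_n$ and follow $W^u_+(p)$ backward.
\end{enumerate}
A cleaner route is to consider the set of $x\in C_0$ whose forward unstable arc stays in $U_n$ until it exits. Each such arc exits in leafwise length at most $L_n$, through $C_0$ or $C_n$. If every such arc exited through $C_0$, then, since $W^u_+(q)$ for a periodic $q\in U_n$ must reach beyond $C_n$, pulling the positive half-leaf back gives a contradiction. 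I expect nonemptiness to be the main obstacle, because it requires arguing carefully which boundary component an unstable arc meets first. What I would try first: take a point $y\in W^u_+(q)$ beyond $C_n$, and follow the leaf backward from $y$ to its last entry into $U_n$ through $C_n$ and then its previous exit through $C_0$. If the segment between $q$ and $y$ crosses $C_0$ after its last visit near $C_n$, this produces an arc from $C_0$ to $C_n$ inside $U_n$. Density of $W^u_+(q)$ guarantees such a crossing pattern: pick $q$ outside $U$ and $y$ beyond $C_n$.

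\emph{Compactness.} Take $x_j\in A_n$ with $x_j\to x$, and let $y_j\in C_n$ be the corresponding endpoints.
\begin{enumerate}
\item By Lemma~\ref{lem:no-trapped-unstable-arcs}, the arcs $[x_j,y_j]_u$ have length at most $L_n$.
\item Pass to a subsequence so that the lengths converge to some $\ell$.
\item By continuity of $W^u$ (leafwise completeness, Proposition~\ref{prop:all-leaves-lines-complete}), the arcs converge to the segment of $W^u_+(x)$ of length $\ell$.
\item This limit segment lies in the closed set $U_n$, and its endpoint $y$ lies in $C_n$, which is closed.
\item Since $x\in C_0$ and $C_0\cap C_n=\emptyset$, we get $\ell>0$. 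Hence $x\in A_n$.
\end{enumerate}

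\emph{Nesting.} Let $x\in A_{n+1}$, with arc $[x,y]_u\subset U_{n+1}$ and $y\in C_{n+1}$. The curve $C_n$ separates $C_0$ from $C_{n+1}$ in $U_{n+1}$, so the arc meets $C_n$. Let $y'$ be the first point of $[x,y]_u$ on $C_n$. Then $[x,y']_u$ stays in the closed annulus between $C_0$ and $C_n$, that is, in $U_n$. Hence $x\in A_n$.
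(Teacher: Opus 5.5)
Your nesting argument is the paper's. Your compactness argument is the same as the paper's up to presentation: the paper parametrizes the arcs proportionally to arclength and applies Arzel\`a--Ascoli, then uses product boxes to place the limit in an unstable leaf, while you use continuous dependence of leaf segments on their base point.

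The gap is in nonemptiness, which as written is not proved. Steps (1)--(6) do not close. For a periodic $p\in\operatorname{int}U_n$, the maximal arc of $W^u(p)\cap U_n$ through $p$ may have both endpoints on $C_0$, both on $C_n$, or run positively from $C_n$ to $C_0$. ``Vary the periodic point'' gives no mechanism forcing the good case, and the ``cleaner route'' is likewise only a slogan. Your final sketch points in the right direction but selects the wrong pair of points. Let $w$ be the last point of $[q,y]_u$ on $C_n$ and $x$ the previous point on $C_0$. Between $x$ and $w$ the leaf can cross $C_n$, make an excursion into the end region beyond $C_n$, and come back. So $[x,w]_u\not\subset U_n$ in general, and $x$ need not lie in $A_n$.

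The fix is to make the extremal choices in the opposite order. Take $q\in S\setminus U$ periodic; this is possible because $S\setminus U$ is a nonempty open set, as $S$ has no boundary. Take $y\in W^u_+(q)$ in the end region beyond $C_n$.
\begin{enumerate}
\item Let $y'$ be the \emph{first} point of $[q,y]_u$ on $C_n$. It exists because $q$ and $y$ lie in different components of $S\setminus C_n$ and the intersection is compact.
\item Let $x$ be the \emph{last} point of $[q,y']_u$ on $C_0$. It exists because $q\notin U$ while $y'\in\operatorname{int}U$.
\item The open arc $(x,y')_u$ misses $C_0\cup C_n$, and its closure meets both curves. It therefore lies in $\operatorname{int}U_n$.
\end{enumerate}
Hence $[x,y']_u\subset U_n$ is positively oriented and $x\in A_n$.

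This is the paper's argument run in reverse time. The paper takes $p$ periodic beyond $C_n$ and picks $z\in C_0\cap W^u_-(p)$ by density. It lets $y$ be the first point of $[z,p]_u$ on $C_n$. The component of $[z,y]_u\cap U_n$ containing $y$ then has its other endpoint on $C_0$, because no earlier point of the segment lies on $C_n$.
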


\begin{proof}
We first prove the nesting property. Let $x\in A_{n+1}$. By definition, there
exists $y\in C_{n+1}\cap W^u_+(x)$ such that
$
[x,y]_u\subset U_{n+1}.
$

Since $C_n$ separates $C_0$ from $C_{n+1}$ in $U$, the arc $[x,y]_u$ meets
$C_n$. Let $z$ be its first intersection with $C_n$. Then
$
[x,z]_u\subset U_n,
$
and hence $x\in A_n$. Therefore
$
A_{n+1}\subset A_n.
$

We next prove that $A_n$ is nonempty. Fix $n$. Since periodic points are dense,
choose a periodic point $p$ in the component of $U\setminus C_n$ containing
the end $e$. By Lemma~\ref{lem:periodic-half-leaves-dense}, the negative
unstable half-leaf $W^u_-(p)$ is dense and therefore meets $C_0$. Choose
$z\in C_0\cap W^u_-(p)$ and consider the oriented unstable segment from $z$
to $p$. Since $z$ and $p$ lie on opposite sides of $C_n$, this segment meets
$C_n$. Let $y$ be its first intersection with $C_n$.

Consider the connected component of the intersection of the segment from $z$
to $y$ with $U_n$ that contains $y$. Its other endpoint lies on $C_0$, since
$y$ is the first intersection with $C_n$. Denote this endpoint by $x$. Then
$
[x,y]_u\subset U_n,
$
so $x\in A_n$. Thus $A_n$ is nonempty.

It remains to prove compactness. Since $C_0$ is compact, it is enough to show
that $A_n$ is closed. Let
$$
x_j\in A_n,
\qquad
x_j\longrightarrow x\in C_0.
$$
For each $j$, choose $y_j\in C_n$ such that
$
\alpha_j=[x_j,y_j]_u\subset U_n.
$

By Lemma~\ref{lem:no-trapped-unstable-arcs}, there exists $L_n>0$ such that
$
\operatorname{length}_u(\alpha_j)\le L_n
$
for every $j$. After passing to a subsequence, we may also assume that
$y_j\to y$ for some $y\in C_n$.

Parametrize each $\alpha_j$ proportionally to leafwise arclength by
$$
\gamma_j:[0,1]\to U_n,
\qquad
\gamma_j(0)=x_j,\quad \gamma_j(1)=y_j.
$$
Let $d$ denote the ambient Riemannian distance and $d_u$ the leafwise
distance along unstable leaves. 

For every $s,t\in[0,1]$ we have
$$
d\bigl(\gamma_j(s),\gamma_j(t)\bigr)
\le
d_u\bigl(\gamma_j(s),\gamma_j(t)\bigr)
\le
\operatorname{length}_u(\alpha_j)|s-t|
\le
L_n|s-t|.
$$
Thus the maps $\gamma_j$ are uniformly Lipschitz, hence uniformly equicontinuous. Since $U_n$ is compact,
the Arzel\`a--Ascoli theorem gives a subsequence converging uniformly to a
continuous curve
$$
\gamma:[0,1]\to U_n.
$$
Moreover,
$$
\gamma(0)=x,
\qquad
\gamma(1)=y.
$$

Since $U_n$ is compact, it is covered by finitely many local product boxes.
Using the continuous dependence of unstable plaques in these boxes and the
uniform convergence of $\gamma_j$, the limit curve $\gamma$ is locally
contained in unstable plaques.
Since $\gamma(0)=x$ and $\gamma(1)=y$, its image is contained in the unstable
leaf through $x$ and joins $x$ to $y$. Hence
$$
\gamma([0,1])=[x,y]_u\subset U_n,
$$
so $x\in A_n$. Therefore $A_n$ is closed and, since $C_0$ is compact,
$A_n$ is compact.

The sets $A_n$ are nonempty, compact, and nested subsets of $C_0$. Therefore
$$
A=\bigcap_{n\ge1}A_n
$$
is nonempty.

\end{proof}

Actually, the points of \(A\) are precisely the gates through which positive unstable
rays can escape to the planar end.

\begin{lemma}
\label{lem:gates-give-proper-rays}
If $x\in A$, then the positive half-leaf
$W^u_+(x)\subset U$ is a properly embedded ray tending to the end $e$. In
particular, $W^u_+(x)$ does not accumulate in any compact subset of $U$.
\end{lemma}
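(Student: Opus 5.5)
For each $n\ge1$, since $x\in A_n$, I fix the arc $[x,y_n]_u\subset U_n$ with $y_n\in C_n\cap W^u_+(x)$. I would first show that these arcs are nested initial segments of the single ray $W^u_+(x)$. All of them start at $x$ and go in the positive direction along the same oriented leaf, which is a copy of $\mathbb R$ by Proposition~\ref{prop:all-leaves-lines-complete}. So any two of them are comparable under inclusion. Moreover, $[x,y_{n+1}]_u$ must cross $C_n$ before it reaches $C_{n+1}$, because $C_n$ separates $C_0$ from $C_{n+1}$ in $U$. Since $[x,y_n]_u$ avoids $U\setminus U_n$, its endpoint $y_n$ comes no later along the leaf than the crossing of $[x,y_{n+1}]_u$ with $C_n$. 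Hence $y_n$ precedes $y_{n+1}$, and $[x,y_n]_u\subset[x,y_{n+1}]_u$.

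Next, I would show that the union of these arcs is the whole half-leaf $W^u_+(x)$. Let $\ell_n$ be the leafwise length of $[x,y_n]_u$. Completeness of the leaf (Proposition~\ref{prop:all-leaves-lines-complete}) makes the positive ray have infinite length, so it suffices to show $\ell_n\to\infty$. The points $y_n\in C_n$ escape every compact subset of $U$ as $n\to\infty$, since the curves $C_n$ tend to $e$. On the other hand, if $\ell_n\le R$ for all $n$, then every $y_n$ lies in the compact leafwise segment of length $R$ starting at $x$, which is impossible. Therefore
\[
W^u_+(x)=\bigcup_n [x,y_n]_u\subset U .
\]

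For properness, I take any compact $K\subset U$ and choose $N$ with $K\subset U_N$ and $K\cap C_m=\emptyset$ for all $m\ge N$. This is possible because $K$ is compact and the exhaustion tends to $e$. The key claim is that past $y_{N+1}$, the ray never returns to $U_N$. Suppose the portion of the ray after $y_{N+1}$ did enter $U_N$. To do so from the end side it must cross $C_{N}$, and that crossing point lies on some arc $[y_{N+1},y_m]_u\subset U_m$. So $[x,y_m]_u$ would cross $C_{N+1}$ outward and later cross back inward. That alone is not a contradiction, since the definition of $A_m$ only requires $[x,y_m]_u\subset U_m$, which allows crossing back and forth.

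This is the main obstacle, and I would use the freedom in the choice of $y_m$ to handle it. In the proof of Lemma~\ref{lem:gate-sets}, $y$ may be taken to be the \emph{first} hit of $C_m$. I would define $y_m$ as the first point of $W^u_+(x)\cap C_m$, noting that membership $x\in A_m$ is unchanged by this choice because the shorter arc still lies in $U_m$. This does not by itself prevent a return to $U_N$ after $y_{N+1}$, so I would argue differently. Suppose the ray re-entered $U_N$ at a point of $K$ infinitely often. Then it would have an accumulation point in $U_{N+1}$. Running the trapping argument of Lemma~\ref{lem:no-trapped-unstable-arcs} on a transversal there, with two consecutive returns, produces a curve $\Gamma$ and a trapped component containing a periodic point. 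This contradicts Lemma~\ref{lem:periodic-half-leaves-dense}. The argument only needs the returning piece of the ray together with a short transversal, and never uses that the whole half-leaf lies in $U_n$. Since the ray cannot accumulate in $K$, it meets $K$ in a compact set, so it is properly embedded. Combined with $\ell_n\to\infty$ and $y_n\to e$, this shows that the ray tends to $e$.
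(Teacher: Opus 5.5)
Your argument is correct, and its outline matches the paper's. You show $W^u_+(x)\subset U$ because the arcs $[x,y_n]_u$ reach every $C_n$. The paper argues that an initial segment up to a first exit from $U$ could meet only finitely many $C_n$; your nested arcs with $\ell_n\to\infty$ exhausting the complete ray are the same idea. You then exclude accumulation with a loop built from two returns together with density of periodic half-leaves, and deduce properness.

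The genuine difference is the choice of loop.
\begin{itemize}
\item The paper joins the two returns by a short arc $\tau$ transverse to $W^s$ and perturbs $\beta\cup\tau$ to a $C^1$ closed transversal to $W^s$. It rules out the inessential case with Lemma~\ref{lem:no-null-transverse-loop} and then traps a \emph{stable} half-leaf of a periodic point in the end-side subannulus.
\item You reuse the Bendixson-sack argument of Lemma~\ref{lem:no-trapped-unstable-arcs}, formed by an unstable arc plus a piece of a transversal to $W^u$. This traps an \emph{unstable} half-leaf and works uniformly whether $\Gamma$ bounds a disc or is essential in $U$.
\end{itemize}
Your route therefore needs neither Poincar\'e--Bendixson nor the corner smoothing. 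The smoothing step is delicate: one must choose $a,b$ so that $\tau$ crosses $W^s$ in the same direction as $\beta$. The paper's route buys the extra information that the loop is essential, and it reuses that picture in Lemma~\ref{lem:gate-leaves}.

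A few points should be made explicit.
\begin{itemize}
\item The sack argument needs $\Gamma\subset U$ so that $\Gamma$ separates $S$. This follows from your step $W^u_+(x)\subset U$ together with a choice of $T$ inside $\operatorname{int}(U)$.
\item If $K$ meets $C_0$, the accumulation point may lie on $\partial U$. As in the paper, the accumulation set is closed, $W^u$-saturated and contained in $U$, but not contained in $C_0$ by general position. So there is also an accumulation point in $\operatorname{int}(U)$.
\item The abandoned ``key claim'' that the ray never returns to $U_N$ after $y_{N+1}$, and the first-hit choice of $y_m$, can simply be deleted. Only non-accumulation is needed for properness.
\item The cleanest justification of $[x,y_n]_u\subset[x,y_{n+1}]_u$ is that $y_{n+1}\in C_{n+1}$ does not lie in $U_n\supset[x,y_n]_u$.
\end{itemize}
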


\begin{proof}
Since $x\in A_n$ for every $n$, the positive unstable half-leaf
$W^u_+(x)$ reaches every $C_n$ along an unstable arc contained in $U_n$.
We first observe that
$$
W^u_+(x)\subset U.
$$
Indeed, if the half-leaf left $U$, the compact initial segment up to its first
exit could meet only finitely many of the curves $C_n$, since $C_n$ tends to
the end $e$. This contradicts the fact that $x\in A_n$ for every $n$.

We claim that $W^u_+(x)$ has no accumulation point in a compact subset of
$U$. Suppose otherwise, as in the proof of
Lemma~\ref{lem:no-trapped-unstable-arcs}, the accumulation set of
$W^u_+(x)$ cannot be contained in $\partial U$. Hence we may choose an
accumulation point
$y\in \operatorname{int}(U).$

Choose a small product box $B$ near $y$. Two sufficiently close returns of
$W^u_+(x)$ to $B$ can be joined by a short arc transverse to $W^s$. More
precisely, choose successive points $a,b$ on these returns and let
$\beta=[a,b]_u$ be the unstable segment between them. Joining $a$ to $b$
inside $B$ by a short arc $\tau$ transverse to $W^s$, after a small perturbation of the transverse arc \(\tau\), we obtain a
\(C^1\) simple closed curve
$$
\Gamma=\beta\cup\tau
$$
which is everywhere transverse to $W^s$.

By Lemma~\ref{lem:no-null-transverse-loop}, $\Gamma$ cannot be
null-homotopic. Since $\Gamma\subset U$ and $U$ is an annulus, $\Gamma$ is
essential and therefore separates off a subannulus $V\subset U$ containing
the end $e$.

Since $W^s$ is oriented and $\Gamma$ is a connected closed transversal,
the stable direction crosses $\Gamma$ with constant sign. Consequently,
one of the two oriented stable half-leaves of every point of $V$ cannot
leave $V$.

Choose a periodic point $p\in V$. One of the stable half-leaves of $p$ is
therefore contained in $V$ and cannot be dense in $S$. This contradicts
Lemma~\ref{lem:periodic-half-leaves-dense}. Hence $W^u_+(x)$ has no
accumulation point in any compact subset of $U$.

It follows that the arclength parametrization of $W^u_+(x)$ is proper.
Since $U$ is closed in $S$, it is proper also as a ray in $S$, and hence
$W^u_+(x)$ is properly embedded. 

Since the ray is proper in $U$, for every $n$ it eventually leaves the
compact annulus $U_n$. As the annuli $U_n$ exhaust $U$ toward $e$, it follows
that
$$
W^u_+(x)\longrightarrow e.
$$
\end{proof}
Next we show that there can be only finitely many such gates.

\begin{lemma}
\label{lem:finitely-many-gates}
The set \(A\subset C_0\) is finite.
\end{lemma}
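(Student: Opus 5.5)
We argue by contradiction and suppose that $A$ is infinite. Since $C_0$ is compact, we may then choose points $x_j\in A$, pairwise distinct, with $x_j\to x_\infty$ in $C_0$. By Lemma~\ref{lem:gate-sets}, $A$ is closed, so $x_\infty\in A$. By Lemma~\ref{lem:gates-give-proper-rays}, each ray $R_j=W^u_+(x_j)$ and the ray $R_\infty=W^u_+(x_\infty)$ is a properly embedded ray in $U$ tending to $e$. Distinct points of $C_0$ may a priori lie on the same unstable leaf. By general position, however, a leaf meets $C_0$ only at isolated points. Passing to a subsequence, we may therefore assume that the $R_j$ lie on pairwise distinct leaves, or at least that they are pairwise disjoint rays. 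Indeed, two gate rays on the same leaf are nested, and the inner one would exit and re-enter through $C_0$, which I would rule out or absorb into the bookkeeping.

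Next we use the annulus topology. Pairwise disjoint proper rays from $C_0$ to $e$ in $U\cong S^1\times[0,\infty)$ cut $U$ into strips. Take three consecutive gates $x_1,x_2,x_3$ close to $x_\infty$, in cyclic order on $C_0$. Then $R_1\cup R_3$ together with a short arc of $C_0$ bounds a closed strip $\Sigma\subset U$. This strip contains $R_2$ and is homeomorphic to $[0,1]\times[0,\infty)$, with the end of $\Sigma$ going to $e$.

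The plan is to show that such a thin strip traps a half-leaf of some periodic point, contradicting Lemma~\ref{lem:periodic-half-leaves-dense}. Unstable leaves cannot cross $R_1$ or $R_3$. Hence an unstable leaf entering $\Sigma$ can only enter or exit through the short arc $\sigma\subset C_0$. Choose a periodic point $p$ in the interior of $\Sigma$, far from $C_0$; its existence follows from density. Both $W^u_\pm(p)$ are dense, so each must leave $\Sigma$, and therefore each crosses $\sigma$.

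By the same dichotomy, the leaf of $p$ is a line entering $\Sigma$ through $\sigma$ and leaving through $\sigma$. Its negative half, traversed backwards from $p$, exits through $\sigma$, and the same holds for its positive half. The arc of $W^u(p)$ inside $\Sigma$ through $p$, together with a subarc of $\sigma$, then bounds a disc $D\subset\Sigma$. We then consider the stable foliation. Letting $\sigma$ be chosen nearly along a stable plaque near $x_\infty$, or using local product structure, the curve made of the unstable arc and the subarc of $\sigma$ can be smoothed into a closed curve transverse to $W^s$. This curve bounds a disc, contradicting Lemma~\ref{lem:no-null-transverse-loop}.

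The main obstacle is this last step. One must arrange that the arc of $C_0$ closing up the unstable arc is transverse to $W^s$ with the correct sign. To handle this, I would first replace the piece of $C_0$ near $x_\infty$ by a short unstable-transverse (stable) plaque $\sigma$ through $x_\infty$ inside a product box. Gates near $x_\infty$ then correspond to points of $\sigma$, since their rays pass through the box. The nearby periodic point's unstable arc re-entering $\sigma$ yields a curve made of an unstable arc and a stable arc. An unstable arc leaving a stable transversal and returning to it on the same side gives a curve that can be perturbed, exactly as in Lemma~\ref{lem:gates-give-proper-rays}, to be transverse to $W^s$. The sign consistency comes from the orientations of $W^s$ and $W^u$. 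If the disc argument fails, the fallback is a Poincaré--Bendixson trapping. In that case one of the stable half-leaves of a periodic point in $D$ or in $\Sigma$ is confined, again contradicting Lemma~\ref{lem:periodic-half-leaves-dense}.
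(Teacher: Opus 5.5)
Your proposal has a genuine gap in the contradiction step. The disc $D$ bounded by the arc of $W^u(p)$ and a subarc $\sigma'$ of $\sigma\subset C_0$ is not contradictory in itself. $C_0$ is only in general position with respect to $W^u$, so unstable leaves may cross into $U$ through $C_0$ and back out again, as happens near any tangency of a curve with a foliation. Moreover, $\sigma'$ has no controlled position relative to $W^s$: it may be tangent to $W^s$ or cross it with both signs. So $\partial D$ cannot in general be smoothed into a closed transversal to $W^s$, and Lemma~\ref{lem:no-null-transverse-loop} gives nothing. Your fallback is also unsupported. The oriented stable direction lies on a fixed side of every oriented unstable leaf, and both $R_1$ and $R_3$ point toward $e$. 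Hence stable leaves enter $\Sigma$ across one of these rays and leave across the other, and nothing traps a stable half-leaf in $\Sigma$ or in $D$.

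Replacing the arc of $C_0$ by a stable plaque is the right move, but you draw the wrong conclusion from it. A stable arc is transverse to $W^u$, so the oriented unstable foliation crosses it with constant sign. The configuration you then analyze, with $W^u(p)$ entering and leaving the region through $\sigma$, is therefore impossible. That impossibility \emph{is} the contradiction, since one of $W^u_\pm(p)$ is confined.

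This is how the paper argues:
\begin{enumerate}
\item It joins two rays on distinct gate leaves by a short stable segment $\tau$ in a product box near a limit of their last intersection points with $C_N$. This keeps $\tau$ in the interior of $U$, whereas a stable plaque through $x_\infty\in C_0$ may leave $U$, and the positive rays need not even meet it.
\item It shrinks $\tau$ to the subarc between the last intersections $a,b$ of the two rays with $\tau$; there are finitely many intersections since the rays are proper. Thus the tails $W^u_+(a)$ and $W^u_+(b)$ meet $\tau$ only at their initial points.
\item It takes $V$ to be the component of $(U\cup\{e\})\setminus\bigl(W^u_+(a)\cup\tau\cup W^u_+(b)\cup\{e\}\bigr)$ disjoint from $C_0$. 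A periodic point in $V$ then has an unstable half-leaf confined to $\overline V$, contradicting Lemma~\ref{lem:periodic-half-leaves-dense}.
\end{enumerate}
Your sketch has neither the last-intersection step nor the constant-sign crossing; without the former, the rays may recross $\sigma$ and the region is not well defined.

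Finally, your reduction to distinct leaves needs a real argument. A leaf can meet $C_0$ infinitely often even though each intersection is isolated in the leaf. The correct reason is this: if $x$ precedes $y$ on one leaf with $x,y\in A$, then $W^u_+(x)\subset U$ forces $y$ to be one of the finitely many tangencies of $C_0$ with $W^u$.
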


\begin{proof}
Suppose that $A$ is infinite. We first observe that infinitely many distinct
unstable leaves meet $A$. Indeed, if $x,y\in A$ lie on the same unstable leaf
and $x$ precedes $y$ in the positive unstable order, then $y$ must be a
tangency of $C_0$ with $W^u$. Otherwise the transverse intersection at $y$
would force $W^u_+(x)$ to leave $U$, contradicting Lemma~\ref{lem:gates-give-proper-rays}.
Since $C_0$ has only finitely many tangencies with $W^u$, each unstable leaf
contains only finitely many points of $A$. Hence there are infinitely many
distinct gate leaves.

For each gate leaf $L$, choose the last point
$
x(L)\in A\cap L
$
in the positive unstable order. By Lemma~\ref{lem:gates-give-proper-rays},
the positive ray
$
R_L=W^u_+(x(L))
$
is properly embedded in $U$ and tends to $e$. Fix $N\ge1$. Since $R_L$
meets $C_N$ and is proper, its intersections with $C_N$ occur in a compact
initial portion of the ray. Let
$$
z_N(L)\in R_L\cap C_N
$$
be the last such intersection in the positive order. The tail of $R_L$
after $z_N(L)$ lies in the component of $U\setminus C_N$ containing $e$.

We therefore obtain infinitely many distinct pairs
$$
\bigl(x(L),z_N(L)\bigr)\in C_0\times C_N.
$$
Passing to a sequence of distinct gate leaves $L_j$, compactness of
$C_0\times C_N$ allows us, after taking a subsequence, to assume that
$$
\bigl(x(L_j),z_N(L_j)\bigr)\longrightarrow (x_\ast,z_\ast)
\in C_0\times C_N.
$$
For distinct sufficiently large $j$ and $k$, set
$$
L=L_j,\qquad L'=L_k.
$$
The corresponding points lie in common product boxes near $x_\ast$ and
$z_\ast$.
Choosing them sufficiently close, the local product
structure gives a short stable segment near $C_0$ joining $L$ to $L'$
and a second short stable segment $\tau$ near $C_N$ joining the two
gate rays.

Since the rays are proper and $\tau$ is compact, their intersections with
$\tau$ are nonempty and compact. Transversality makes these intersections discrete, hence
finite. Then let $a\in R_L\cap\tau$ and $b\in R_{L'}\cap\tau$ be the last intersections
of the two positive rays with $\tau$. 
Replacing
$\tau$ by the subarc between $a$ and $b$, we may assume that the positive
tails
$$
R_a=W^u_+(a)\subset R_L,
\qquad
R_b=W^u_+(b)\subset R_{L'}
$$
meet $\tau$ only at their initial points.

Consider the end compactification
$$
\widehat U=U\cup\{e\}\cong D^2.
$$
Since $R_a$ and $R_b$ are disjoint properly embedded rays tending to $e$,
the set
$$
R_a\cup\tau\cup R_b\cup\{e\}
$$
is a simple closed curve in $\widehat U$. Let $V$ be the component of its
complement which does not meet $C_0$. Thus $V$ is an open region extending
toward the end $e$, bounded in $U$ by the two unstable rays and the stable
arc $\tau$.

Choose a periodic point $p\in V$. The two unstable sides of $\partial V$
cannot be crossed by unstable leaves. Moreover, since $W^u$ is oriented and
$\tau$ is a connected stable arc, the positive unstable direction crosses
$\tau$ with constant sign. It follows that one of the two unstable half-leaves
$W^u_+(p)$ or $W^u_-(p)$ cannot leave $V$. This half-leaf is therefore not
dense in $S$, contradicting Lemma~\ref{lem:periodic-half-leaves-dense}.

Hence only finitely many unstable leaves can meet $A$. Since each such leaf
contains only finitely many points of $A$, the set $A$ is finite.
\end{proof}

We also need the following consequence of the preceding construction.

\begin{lemma}
\label{lem:gate-leaves}
Let
\[
\mathcal G_e^+
=
\left\{
L\in W^u:
\text{ the positive end of }L\text{ tends to }e
\right\}.
\]
Then
\[
\mathcal G_e^+
=
\{W^u(a):a\in A\}.
\]
In particular, $\mathcal G_e^+$ is finite and nonempty.
\end{lemma}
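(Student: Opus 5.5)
We prove the two inclusions separately. Finiteness and nonemptiness then follow from Lemma~\ref{lem:finitely-many-gates} and Lemma~\ref{lem:gate-sets}.

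\emph{The inclusion $\{W^u(a):a\in A\}\subset\mathcal G_e^+$.} Let $a\in A$. By Lemma~\ref{lem:gates-give-proper-rays}, the ray $W^u_+(a)$ is properly embedded in $U$ and tends to $e$. The positive end of the leaf $W^u(a)$ is exactly the positive end of this ray. Hence $W^u(a)\in\mathcal G_e^+$.

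\emph{The inclusion $\mathcal G_e^+\subset\{W^u(a):a\in A\}$.} Let $L$ be a leaf whose positive end tends to $e$. The plan is to produce a point of $A$ on $L$.

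\begin{enumerate}
\item Since the positive end of $L$ tends to $e$, there is a point $w\in L$ such that the tail $W^u_+(w)$ lies in $U\setminus C_0$ and eventually leaves every $U_n$.
\item We need $L$ to meet $C_0$. Suppose instead that $L$ is entirely contained in the open annulus $U\setminus C_0$. Its negative end then cannot accumulate in a compact part of $U$. If it did, we could build a closed curve transverse to $W^s$ exactly as in the proof of Lemma~\ref{lem:gates-give-proper-rays}. That curve would be essential, would trap a stable half-leaf of a periodic point, and so would contradict Lemma~\ref{lem:periodic-half-leaves-dense}. So the negative end would also tend to $e$, and $L$ would be a properly embedded line in $U$ with both ends at $e$. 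In $\widehat U\cong D^2$, the set $L\cup\{e\}$ is then a simple closed curve bounding a disc region $V$ inside $U\setminus C_0$. No unstable leaf can cross $\partial V$ except at $e$. So for a periodic point $p\in V$, both unstable half-leaves stay in $\overline V$, contradicting Lemma~\ref{lem:periodic-half-leaves-dense}.
\item Hence $L$ meets $C_0$. Let $x$ be the last point of $L\cap C_0$ preceding $w$ in the positive order. This point is well defined because the intersection is discrete by general position and bounded above by $w$.
\item After $x$, the ray $W^u_+(x)$ stays in $U$ and tends to $e$, so it meets every $C_n$. For each $n$, let $y_n$ be the first point of $W^u_+(x)$ on $C_n$. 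Then $[x,y_n]_u\subset U_n$, so $x\in A_n$ for all $n$. Therefore $x\in A$ and $L=W^u(x)$.
\end{enumerate}

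The step that needs care is step 3 near tangencies. The ray might leave $U$ briefly through a tangency of $C_0$. The choice of $x$ as the last point on $C_0$ handles this: after $x$, the ray stays on the interior side.

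The main obstacle is step 2, excluding a leaf lying entirely inside $U\setminus C_0$ with both ends at $e$. I would first try to handle it with the disc-compactification trapping argument above, mirroring the proof of Lemma~\ref{lem:finitely-many-gates}.
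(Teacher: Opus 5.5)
Your proposal is correct and follows the paper's proof essentially step by step. The forward inclusion comes from Lemma~\ref{lem:gates-give-proper-rays}. A leaf missing $C_0$ is excluded by first showing, via the transverse-loop argument, that its negative end also tends to $e$, and then trapping a periodic point's unstable leaf in the disc bounded by $L\cup\{e\}$ in $\widehat U$. The gate is then the last point of $L\cap C_0$. Your remarks that the whole unstable leaf of $p$ is trapped, and that the last-point choice takes care of tangencies, are small sharpenings of the same argument.
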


\begin{proof}
If $a\in A$, then by Lemma~\ref{lem:gates-give-proper-rays},
the positive half-leaf $W^u_+(a)$ tends properly to $e$. Hence
$W^u(a)\in\mathcal G_e^+$.

Conversely, let $L\in\mathcal G_e^+$. By definition, the positive end of
$L$, with respect to the fixed orientation of $W^u$, tends to $e$. 
We claim that $L$ meets $C_0$. Suppose not. Since the positive end of $L$
tends to $e$, a positive tail of $L$ is contained in $U$. As $L$ does not meet
$\partial U=C_0$, the whole leaf $L$ is contained in $U$.

The negative end of $L$ cannot accumulate in a compact subset of $U$. Indeed,
such an accumulation would give, exactly as in the proof of
Lemma~\ref{lem:gates-give-proper-rays}, two nearby points on $L$ joined
by a short arc transverse to $W^s$. Together with the unstable subarc between
them, this gives a closed curve transverse to $W^s$.

If this curve is null-homotopic, Lemma~\ref{lem:no-null-transverse-loop}
gives a contradiction. If it is essential, the essential-case argument in
Lemma~\ref{lem:gates-give-proper-rays} produces a stable half-leaf of a
periodic point trapped on one side of the curve.
This contradicts Lemma~\ref{lem:periodic-half-leaves-dense}.

Thus $L$ is a properly embedded unstable line in $U$ whose two ends both tend
to $e$. After adding the end $e$, the compactification $U\cup\{e\}$ is a
closed disc, and $L\cup\{e\}$ separates it into two components. Let $V$ be the component disjoint from $C_0$, and choose a periodic point
$p\in V$. Since
unstable leaves cannot cross $L$, at least one unstable half-leaf of this
periodic point remains trapped in that component. This contradicts
Lemma \ref{lem:periodic-half-leaves-dense}. Therefore $L\cap C_0\neq\varnothing$.

Since the positive end of $L$ tends to $e$, there is a last point
$a\in L\cap C_0$ in the positive order on $L$. For every $n$, the first
intersection $y_n$ of the positive ray from $a$ with $C_n$ satisfies
$
[a,y_n]_u\subset U_n.
$

Hence $a\in A_n$ for every $n$, and therefore $a\in A$.
Since $A$ is finite by Lemma~\ref{lem:finitely-many-gates}, the set
$\mathcal G_e^+$ is finite.
\end{proof}

\begin{proof}[Proof of Theorem~\ref{thm:no-planar-periodic-end}]
Assume, for contradiction, that $e$ is an isolated planar periodic end. Replacing $f$ by a further positive iterate, we may assume that $e$ is fixed.

By Lemma~\ref{lem:gate-leaves}, the set
$
\mathcal G_e^+=\{W^u(a):a\in A\}
$
is finite and nonempty. 

Since $f$ fixes $e$ and preserves the orientation of $W^u$,
\[
f(\mathcal G_e^+)=\mathcal G_e^+.
\]

We do not claim that $f(A)=A$, since $f(C_0)$ need not equal $C_0$.

Therefore for any leaf $L\in\mathcal G_e^+$, $L$ is periodic, so there exists $k>0$ such
that
$$
f^k(L)=L.
$$

Replacing $k$ by a multiple, we may assume that $C^{-1}\lambda^{-k}<1$, where
$C$ and $\lambda$ are the constants in the unstable expansion estimate. Then
$f^{-k}|_L$ is a strict contraction with respect to the unstable leafwise
metric on $L$. 
By Proposition~\ref{prop:all-leaves-lines-complete}, the unstable leaf $L$ is
complete in its leafwise metric. Therefore the Banach fixed point theorem
gives a point $p\in L$ such that
$$
f^{-k}(p)=p.
$$

Equivalently,
$$
f^k(p)=p.
$$
Thus $p$ is a periodic point.

But $L\in\mathcal G_e^+$, so one unstable half-leaf of $p$ tends properly to
the end $e$. This half-leaf is eventually contained in every sufficiently
small neighborhood of $e$, and therefore is not dense in $S$. This contradicts
the density of unstable half-leaves through periodic points.

Hence no isolated planar end can be periodic.
\end{proof}

\end{document}